\documentclass{article}
\usepackage{graphicx}
\usepackage[none]{hyphenat}
\usepackage{amsmath}
\usepackage{amssymb}
\usepackage{amsthm}
\usepackage{mathtools}

\title{A surprising generalization of the M\"{o}bius function}
\author{George E. Andrews, Louis H. Kauffman, Divyamaan Sahoo}
\date{}

\newtheorem{thm}{Theorem}
\newtheorem{lem}{Lemma}
\newtheorem{prop}{Proposition}

\begin{document}

\maketitle

\begin{center}
    \textit{In dedication to J. M. Flagg (1949--2025)}
\end{center}

\setlength{\parindent}{0pt}
\setlength{\parskip}{1em}

\begin{abstract}
\noindent We introduce a broad generalization of a recursive formula for the M\"{o}bius function $\mu(n) = 1-n-\sum_{d=2}^{n-1}\mu(d)\left[\frac{n}{d}\right]$ due to George Spencer-Brown. By replacing the greatest integer function $\left[\frac{n}{d}\right]$ in this classical recurrence with an arbitrary arithmetic function $f\left(\left[\frac{n}{d}\right]\right)$ with $f(1)=1$, we define a new generalized family of functions, denoted $\star(n)$. We prove that $\star(p) = -1$ if and only if $p$ is prime. This result yields a surprising algebraic characterization of primes and reveals a deep structural property underlying divisor sums and the greatest integer function.
\end{abstract}

\textbf{Keywords:} M\"{o}bius function, arithmetic functions, primality, divisor sums, greatest integer function, recurrence relations. \\
\textbf{AMS subject classification number:} 11A25 (primary), 11A41 (secondary)

\section{Introduction}

The M\"{o}bius function $\mu$ is at the heart of number theory and physics \cite{chen2010}. Beyond its classical role in multiplicative number theory, M\"{o}bius inversion appears naturally in inverse problems, statistical physics, and lattice systems \cite{chen2010}. 

\noindent In 1826, E. Meissel \cite{meissel1826} observed:
$$\sum_{d=1}^n \mu(d)\left[\frac{n}{d}\right]=1,$$
where $\left[{.}\right]$ denotes the greatest integer function. In 2012, G. Spencer-Brown \cite{flagg} reformulated this unique identity to reveal a recursive formula for $\mu(n)$ that does not require knowledge of the prime factorization of $n$:
$$\mu(n) = 1-n-\sum_{d=2}^{n-1} \mu(d)\left[\frac{n}{d}\right]$$

Computation of the M\"{o}bius function using this recursive formula is significantly more efficient than traditional sieve methods, as demonstrated by D. Sahoo \cite{sahoo}. Sahoo constructed a family of recursive functions $\star_k$, for any positive integer $k$: 
$$\star_k (n)=1-n-\sum_{d=2}^{n-1}\star_k(d)\left[\frac{n}{d}\right]^k,$$
and conjectured that $\star_k(p) = -1$ iff $p$ is prime. Through numerical experiments, he observed that this conjecture holds for $k\in \mathbb{Q}, \mathbb{R}$, and $\mathbb{C}$, revealing an uncountably infinite family of primality tests.
 
This led to a much more general conjecture. Let $\star:\mathbb{N}\to\mathbb{N}$ and $f:\mathbb{N}\to\mathbb{N}$, with $\star(1)=1$ and $f(1)=1$. Then define:
\begin{equation}
    \boxed{\star(n)=1-n-\sum_{d=2}^{n-1}\star(d) f\left(\left[\frac{n}{d}\right]\right)}
    \label{eq:star}
\end{equation}

We note that

$\star(2)$\\ 
$= 1 - 2 - 0$\\
$= -1$.

$\star(3)$\\ 
$= 1 - 3 - \star(2).f(1)$\\
$= 1 - 3 - (-1).1$\\
$= -1.$

$\star(4)$\\
$= 1 - 4 - \star(2).f(2) - \star(3).f(1)$\\
$= 1 - 4 - (-1).f(2) - (-1).1$\\
$= -2 + f(2)$.

$\star(5)$\\
$= 1 - 5 - \star(2).f(2) - \star(3).f(1) - \star(4).f(1)$\\
$= 1 - 5 - (-1).f(2) - (-1).1 - (-2 + f(2)).1$\\
$= -1$.

It would appear that $\star(p)=-1$ for all primes $p$, and the object of this paper is to prove this remarkable fact.\\

\begin{thm}
$\mathbf{\star(p)=-1}$ \textbf{iff} $\mathbf{p}$ \textbf{is prime.}
\end{thm}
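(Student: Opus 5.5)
The plan is to turn the recurrence \eqref{eq:star} into a divisor-sum recurrence by taking first differences in $n$; the prime case then falls out directly. Put $S(n)=\star(n)+\sum_{d=2}^{n-1}\star(d)\,f\!\left(\left[\frac{n}{d}\right]\right)$, so that \eqref{eq:star} says $S(n)=1-n$, and hence $S(n)-S(n-1)=-1$ for $n\ge 3$. The key elementary fact is that $\left[\frac{n}{d}\right]=\left[\frac{n-1}{d}\right]$ unless $d\mid n$, in which case $\left[\frac{n-1}{d}\right]=\frac{n}{d}-1$.

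\textbf{Step 1 (the difference recurrence).} In $S(n)-S(n-1)$, the term $d=n-1$ of $S(n)$ contributes $\star(n-1)f(1)=\star(n-1)$. This cancels the isolated $\star(n-1)$ in $S(n-1)$. For $2\le d\le n-2$ only the divisors of $n$ survive. Setting $g(m)=f(m)-f(m-1)$ for $m\ge 2$, this gives, for all $n\ge 3$,
$$\star(n)=-1-\sum_{\substack{d\mid n\\ 1<d<n}}\star(d)\,g\!\left(\tfrac{n}{d}\right).$$
Here $d=n-1$ divides $n$ only when $n=2$, so it causes no trouble. I will check this against the computed values: $\star(4)=-1-\star(2)g(2)=-2+f(2)$, which agrees with the example.

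\textbf{Step 2 (primes).} If $p\ge 3$ is prime, the sum in Step 1 is empty, so $\star(p)=-1$. For $p=2$ we have $\star(2)=-1$ directly. This proves the ``if'' direction for every $f$ with $f(1)=1$.

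\textbf{Step 3 (the converse, and the main obstacle).} This is where care is needed. For arbitrary $f$ the converse is false: if $f(2)=1$ then $\star(4)=-1$. More generally $\star(p^2)=-1+g(p)$, so $\star(p^2)=-1$ exactly when $f(p)=f(p-1)$. So the ``only if'' direction needs a genericity or positivity hypothesis on $f$, and I would state it that way.

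My first attempt is algebraic. Treat the values $g(2),g(3),\dots$ as indeterminates, and prove by strong induction using Step 1 that $\star(n)$ is a polynomial whose constant term is $-1$ and whose linear part is $\sum_{d\mid n,\,1<d<n} g(n/d)$. The linear part of $\star(n)$ comes only from the constant terms $-1$ of the $\star(d)$ multiplied by $g(n/d)$; every other product is at least quadratic. For composite $n$ this linear part is a nonzero polynomial, so $\star(n)+1\ne 0$ identically, and $\star(n)\ne -1$ for all $f$ outside a proper algebraic set.

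To recover Sahoo's families $f(m)=m^k$, I would then try to show that $\star(n)\ne -1$ whenever $g>0$. The plan is to track the sign or size of $\star(n)+1$ along the divisor lattice, starting from $\star(pq)=-1+g(p)+g(q)$ and $\star(p^2)=-1+g(p)$. Controlling the alternating contributions in the induction for numbers with many prime factors is the step I expect to be hardest.
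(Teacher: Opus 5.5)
Your Steps 1 and 2 follow the paper's proof. The paper also differences the recurrence, after adding $f(n)$ so the sum runs over $1\le d\le n$. It uses that $[n/d]-[(n-1)/d]$ is $1$ or $0$ according as $d\mid n$ or not, reaches the same divisor-sum recurrence, and reads off $\star(p)=-1$ from the empty sum.

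Your remark that the converse fails for a fixed $f$ is correct: with $f\equiv 1$ every $g(m)=0$, so $\star(n)=-1$ for all $n\ge 2$. The paper's proof accordingly shows only that for composite $n$, $\star(n)$ is not identically $-1$ as $f$ varies. Your algebraic argument proves exactly this, with different bookkeeping:
\begin{itemize}
\item The paper takes the smallest prime $p\mid n$ and $q=n/p$, and argues that the term $+f(q)$ produced by $d=p$ cannot be cancelled. It inspects only the outer factors $f(n/k)$ and $f(n/k-1)$. It tacitly uses that no nested factor $\star(k)$ contains $f(q)$, which is true because $\star(k)$ involves $f$ only at arguments at most $k/2<q$.
\item You pass to the coordinates $g(m)$, in which every $\star(d)$ with $d\ge 2$ has constant term $-1$. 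The degree count then handles the nested factors automatically and gives the whole linear part $\sum_{m\mid n,\,1<m<n} g(m)$.
\end{itemize}
The paper's version, in exchange, names one explicit variable that enters $\star(n)$ with coefficient $1$. To land on an admissible $f:\mathbb{N}\to\mathbb{N}$, note that any choice of nonnegative integers $g(m)$ gives one, and a nonzero polynomial cannot vanish on all such choices.

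What would fail is the positivity half of Step 3. The hypothesis $g>0$ does not rescue the converse, so no sign-tracking induction under it can succeed. Your own recurrence gives
\[
\star(8)=-1-\star(2)\,g(4)-\star(4)\,g(2)=-1+g(4)+g(2)-g(2)^2 .
\]
So $g(2)=g(4)=2$ gives $\star(8)=-1$; for instance take $f(1),f(2),f(3),f(4)=1,3,4,6$, which is strictly increasing. You can confirm this directly from the original recurrence.

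Even within Sahoo's family $f(m)=m^k$, one finds $\star(8)+1=3\cdot 2^k-3^k-2$. This equals $1$ at $k=2$ and is negative at $k=5/2$, so it vanishes at some real $k\in(2,5/2)$, roughly $k\approx 2.37$. The real-$k$ claim quoted in the introduction is therefore itself false, and there is nothing there to recover. Drop positivity and state the converse in the genericity form, which is what you and the paper actually prove.
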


In addition to Theorem 1, we observe:\\
$\star(p^2) = f(p) - f(p-1) - 1$ iff $p$ is a prime, and\\
$\star(pq)=f(p)-f(p-1)+f(q)-f(q-1)-1$ iff $p$ and $q$ are distinct primes.

This property of $\star(pq)$ follows from the reformulation of $\star$ as a sum over divisors (Proposition 1) introduced in the following section.

\section{Proof of Theorem 1}

This proof is divided into two parts. First, we prove Proposition 1, that $\star$ can be reformulated as a sum over divisors. Then, we use Proposition 1 to prove Theorem 1, that $\star(p)=-1$ iff $p$ is prime.

We begin this section with some background. Assume $\star:\mathbb{N}\to\mathbb{N}$ and $f:\mathbb{N}\to\mathbb{N}$, with $\star(1)=1$ and $f(1)=1$. By definition \eqref{eq:star},
\begin{equation*}
    \boxed{\star(n)=1-n-\sum_{d=2}^{n-1}\star(d) f\left(\left[\frac{n}{d}\right]\right)}
\end{equation*}

First consider, 
$$\star(n)=1-n-\sum_{d=2}^{n-1}\star(d) f\left(\left[\frac{n}{d}\right]\right)$$
$$\implies \sum_{d=2}^{n-1}\star(d) f\left(\left[\frac{n}{d}\right]\right)+\star(n)=1-n$$
$$\implies f(n)+\sum_{d=2}^{n-1}\star(d) f\left(\left[\frac{n}{d}\right]\right)+\star(n)=f(n)+1-n$$
\begin{equation}
    \label{eq:n}
   \implies \sum_{d=1}^{n}\star(d) f\left(\left[\frac{n}{d}\right]\right)=f(n)+1-n 
\end{equation}

Now, replacing $n$ by $n-1$ in Equation~\eqref{eq:n} gives:
\begin{equation}
    \label{eq:n-1}
    \sum_{d=1}^{n-1}\star(d) f\left(\left[\frac{n-1}{d}\right]\right)=f(n-1)+1-(n-1)
\end{equation}

Subtracting Equation~\eqref{eq:n-1} from Equation~\eqref{eq:n}, we have: 
$$\sum_{d=1}^{n}\star(d) f\left(\left[\frac{n}{d}\right]\right) - \sum_{d=1}^{n-1}\star(d) f\left(\left[\frac{n-1}{d}\right]\right) = f(n)+1-n - \left(f(n-1)+1-(n-1)\right)$$
$$\implies \star(n)+\sum_{d=1}^{n-1}\star(d) f\left(\left[\frac{n}{d}\right]\right) - \sum_{d=1}^{n-1}\star(d) f\left(\left[\frac{n-1}{d}\right]\right) = f(n)-f(n-1)-1$$
$$\implies \star(n)+\sum_{d=1}^{n-1}\star(d)\left( f\left(\left[\frac{n}{d}\right]\right)-f\left(\left[\frac{n-1}{d}\right]\right)\right) = f(n)-f(n-1)-1$$
$$\implies \star(n)+\star(1)\left( f\left(\left[\frac{n}{1}\right]\right)-f\left(\left[\frac{n-1}{d}\right]\right)\right)+\sum_{d=2}^{n-1}\star(d)\left( f\left(\left[\frac{n}{d}\right]\right)-f\left(\left[\frac{n-1}{d}\right]\right)\right)$$
$$= f(n)-f(n-1)-1$$
$$\implies \star(n)+f(n)-f(n-1)+\sum_{d=2}^{n-1}\star(d)\left( f\left(\left[\frac{n}{d}\right]\right)-f\left(\left[\frac{n-1}{d}\right]\right)\right)= f(n)-f(n-1)-1$$
$$\implies \star(n)+\sum_{d=2}^{n-1}\star(d)\left( f\left(\left[\frac{n}{d}\right]\right)-f\left(\left[\frac{n-1}{d}\right]\right)\right)= -1$$
This implies the following equivalent reformulation of the star function:
\begin{equation}
    \boxed{\star(n)= -1-\sum_{d=2}^{n-1}\star(d)\left( f\left(\left[\frac{n}{d}\right]\right)-f\left(\left[\frac{n-1}{d}\right]\right)\right)}
\end{equation}

Now consider two lemmas:

\begin{lem}
    $\left[\frac{n}{d}\right]-\left[\frac{n-1}{d}\right] = 1$ iff $d|n$, else $=0$.
\end{lem}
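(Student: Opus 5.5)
We prove Lemma 1 with the division algorithm, treating $d$ as a fixed positive integer and $n \ge 1$. Write $n = qd + r$ with integers $q \ge 0$ and $0 \le r \le d-1$, so that $\left[\frac{n}{d}\right] = q$. Then $n-1 = qd + (r-1)$, and the proof splits into two cases according to whether $r = 0$, i.e.\ according to whether $d \mid n$.

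\emph{Case $r \ge 1$, i.e.\ $d \nmid n$.} Here $0 \le r-1 \le d-2 < d$, so $n-1 = qd + (r-1)$ is itself a valid division-algorithm representation. Hence $\left[\frac{n-1}{d}\right] = q$ and the difference is $0$.

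\emph{Case $r = 0$, i.e.\ $d \mid n$.} Here $n = qd$ with $q \ge 1$ (because $n \ge 1$), and
\[
n-1 = (q-1)d + (d-1), \qquad 0 \le d-1 < d .
\]
Hence $\left[\frac{n-1}{d}\right] = q-1$ and the difference is $1$.

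The two cases are exhaustive and give different values, so the difference equals $1$ exactly when $d \mid n$ and equals $0$ otherwise. This gives both directions of the ``iff'' together with the ``else $=0$'' clause. Equivalently, $\left[\frac{n}{d}\right]$ counts the multiples of $d$ in $\{1,\dots,n\}$, and passing from $n-1$ to $n$ adds exactly one new element, $n$, which is a multiple of $d$ precisely when $d \mid n$.

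There is no real obstacle. The only points needing care are the boundary checks $r-1 \ge 0$ in the first case and $q-1 \ge 0$ in the second, so that each rewritten expression is a legitimate quotient–remainder pair, and the uniqueness of the division algorithm, which turns those representations into the stated values of the floor function.

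The lemma's role later is clear. Combined with the boxed reformulation, it shows that $f\left(\left[\frac{n}{d}\right]\right) - f\left(\left[\frac{n-1}{d}\right]\right)$ vanishes unless $d \mid n$. When $d \mid n$ the two arguments are $n/d$ and $n/d - 1$, so the sum collapses to a sum over the proper divisors $d$ of $n$ with $1 < d < n$, with weights $f(n/d) - f(n/d - 1)$. For a prime $p$ that sum is empty, giving $\star(p) = -1$.
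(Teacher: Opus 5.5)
Your proof is correct. It handles the non-divisible case differently from the paper.

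The paper works on the real line. It notes that the half-open interval $\left(\frac{n-1}{d},\frac{n}{d}\right]$ has length $\frac{1}{d}\le 1$, asserts that this interval contains an integer iff $\frac{n}{d}$ is an integer, and concludes that the two floors agree when no integer lies in it. In the divisible case it computes $\left[k-\frac{1}{d}\right]=k-1$, which is essentially your second case.

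In the non-divisible case you stay in integer arithmetic and read off $\left[\frac{n-1}{d}\right]=q$ from $n-1=qd+(r-1)$ with $0\le r-1<d$. This makes airtight the one step the paper leaves implicit. The length bound alone does not rule out an integer inside the interval: for example, $(0.5,1.2]$ has length less than $1$ and contains $1$. What is actually needed is that an integer $m$ in the interval would satisfy $n-1<md\le n$, forcing $md=n$. Your remainder bookkeeping gets this for free.

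The paper's picture is more geometric. Your counting remark, that $\left[\frac{n}{d}\right]$ is the number of multiples of $d$ in $\{1,\dots,n\}$, is the shortest proof of all.

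A small aside: your check $q-1\ge 0$ is unnecessary but harmless. The identity $\left[\frac{n-1}{d}\right]=q-1$ follows from $\frac{n-1}{d}=(q-1)+\frac{d-1}{d}$ with $0\le\frac{d-1}{d}<1$, whatever the sign of $q-1$.
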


\begin{proof}
The rational numbers $\frac{n}{d}$ and $  \frac{n-1}{d}$ differ by $\frac{1}{d}$. The interval $\left(\frac{n-1}{d}, \frac{n}{d}\right]$ has length $\frac{1}{d} \le 1$, since $d$ is a positive integer. This interval contains an integer iff $  \frac{n}{d}$ is an integer. 

If $d|n$, then $  \frac{n}{d}$ is an integer, say $k$. It follows that $  \frac{n-1}{d} = k - \frac{1}{d}$. Since $d \ge 1$, it follows that $\left[ k - \frac{1}{d}\right]=k-1$. Thus, $  \left[\frac{n}{d}\right]-\left[\frac{n-1}{d}\right] = k - (k-1) = 1$.

If $d \nmid n$, then $  \frac{n}{d}$ is not an integer. No integer lies within the interval $  \left(\frac{n-1}{d}, \frac{n}{d}\right]$. This implies that $  \frac{n}{d}$ and $  \frac{n-1}{d}$ must lie between the same two consecutive integers, so $  \left[\frac{n-1}{d}\right] = \left[\frac{n}{d}\right]$, and $  \left[\frac{n}{d}\right] - \left[\frac{n-1}{d}\right]=0$.
\end{proof}

\begin{lem}
  $  f\left(\left[\frac{n}{d}\right]\right) - f\left(\left[\frac{n-1}{d}\right]\right) = 0$, if $d\nmid n$.  
\end{lem}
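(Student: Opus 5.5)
The plan is to derive Lemma 2 directly from Lemma 1. The only fact needed about $f$ is that it is a function: equal inputs give equal outputs. No monotonicity, multiplicativity, or other property of $f$ is used, and the normalization $f(1)=1$ plays no role here.

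Fix positive integers $n$ and $d$ with $d\nmid n$. By Lemma 1, the integers $\left[\frac{n}{d}\right]$ and $\left[\frac{n-1}{d}\right]$ differ by $0$, so
$$\left[\frac{n}{d}\right]=\left[\frac{n-1}{d}\right].$$
We then apply $f$ to both sides of this equality of natural numbers:
$$f\left(\left[\frac{n}{d}\right]\right)=f\left(\left[\frac{n-1}{d}\right]\right).$$
Subtracting gives the claimed difference of $0$.

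One point needs checking. When $d=n$, we have $d\mid n$, so that case is excluded; in the range $2\le d\le n-1$ relevant to the boxed recurrence, both arguments are meaningful. Even in an edge case where $\left[\frac{n-1}{d}\right]=0$ falls outside the stated domain $\mathbb{N}$ of $f$, the hypothesis $d\nmid n$ forces $\left[\frac{n}{d}\right]=\left[\frac{n-1}{d}\right]$. So the two arguments of $f$ are literally the same number, and the difference is a term minus itself.

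There is no real obstacle; the substantive content is entirely in Lemma 1. The purpose of recording Lemma 2 is to prepare for Proposition 1: it lets the sum in the boxed reformulation
$$\star(n)= -1-\sum_{d=2}^{n-1}\star(d)\left( f\left(\left[\frac{n}{d}\right]\right)-f\left(\left[\frac{n-1}{d}\right]\right)\right)$$
be restricted to divisors $d\mid n$. For each such divisor, Lemma 1 gives $\left[\frac{n}{d}\right]=\frac{n}{d}$ and $\left[\frac{n-1}{d}\right]=\frac{n}{d}-1$, so the summand becomes $\star(d)\bigl(f(n/d)-f(n/d-1)\bigr)$.
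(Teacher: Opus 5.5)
Your proof is correct and is the paper's argument: Lemma 1 gives $\left[\frac{n}{d}\right]=\left[\frac{n-1}{d}\right]$ when $d\nmid n$, and applying $f$ to equal arguments makes the difference vanish. One small quibble with your edge-case remark: if $f$ is undefined at $0$, then $f(0)-f(0)$ is undefined rather than $0$; but that case ($d>n$) never occurs in the range $2\le d\le n-1$ where the lemma is used, so nothing is lost.
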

\begin{proof}
If $d\nmid n$ then
$  \left\lfloor\frac{n}{d}\right\rfloor = \left\lfloor\frac{n-1}{d}\right\rfloor$ by Lemma 1. So $  f\left(\left[\frac{n}{d}\right]\right) - f\left(\left[\frac{n-1}{d}\right]\right)= 0.$
\end{proof}

Using Lemmas 1 and 2, the star function is reformulated as a sum over divisors:
$$\star(n)= -1-\sum_{d=2}^{n-1}\star(d)\left( f\left(\left[\frac{n}{d}\right]\right)-f\left(\left[\frac{n-1}{d}\right]\right)\right)$$
$$\implies \star(n)= -1-\sum_{d=2, d|n}^{n-1}\star(d)\left( f\left(\left[\frac{n}{d}\right]\right)-f\left(\left[\frac{n-1}{d}\right]\right)\right)$$
$$-\sum_{d=2, d\nmid n}^{n-1}\star(d)\left( f\left(\left[\frac{n}{d}\right]\right)-f\left(\left[\frac{n-1}{d}\right]\right)\right)$$

By Lemma 2, $\sum_{d=2, d\nmid n}^{n-1}\star(d)\left( f\left(\left[\frac{n}{d}\right]\right)-f\left(\left[\frac{n-1}{d}\right]\right)\right) = 0$. Hence, the star function can be reformulated as a sum over divisors. This is used to prove Theorem 1.
\begin{prop}
    \begin{equation}
    \boxed{\star(n)= -1-\sum_{d=2, d|n}^{n-1}\star(d)\left( f\left(\left[\frac{n}{d}\right]\right)-f\left(\left[\frac{n-1}{d}\right]\right)\right)}
    \end{equation}
\end{prop}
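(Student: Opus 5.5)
The plan is to derive Proposition 1 from the boxed difference formula obtained just before Lemma 1,
$$\star(n)= -1-\sum_{d=2}^{n-1}\star(d)\left( f\left(\left[\frac{n}{d}\right]\right)-f\left(\left[\frac{n-1}{d}\right]\right)\right),$$
and then discard the terms with $d\nmid n$ using Lemma 2. Because that difference formula was obtained by subtracting Equation~\eqref{eq:n-1} from Equation~\eqref{eq:n}, I will first check that it is valid for every $n\ge 2$.

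For $n=2$ the sum is empty. The formula then gives $\star(2)=-1$, which matches the direct computation. For $n\ge 3$, Equation~\eqref{eq:n} holds at both $n$ and $n-1$, since $n-1\ge 2$. The only care needed is at $n-1=1$ in other cases: Equation~\eqref{eq:n} at $n=1$ reads $\star(1)f(1)=f(1)+1-1$, which is true because $\star(1)=f(1)=1$. So the subtraction is legitimate for all $n\ge 2$.

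Next I split the range $2\le d\le n-1$ into the indices with $d\mid n$ and those with $d\nmid n$. For each $d\nmid n$, Lemma 1 gives $\left[\frac{n}{d}\right]=\left[\frac{n-1}{d}\right]$. Applying $f$ to equal arguments gives equal values, so each such summand is $\star(d)\cdot 0=0$; this is exactly Lemma 2. The whole sum over non-divisors therefore vanishes. Only the divisors $d$ of $n$ with $2\le d\le n-1$ remain, which is the boxed identity of Proposition 1.

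There is no real obstacle. The one point needing attention is the bookkeeping in the difference step. The $d=n$ term of Equation~\eqref{eq:n} has no partner in Equation~\eqref{eq:n-1} and contributes $\star(n)f(1)=\star(n)$. The $d=1$ terms contribute $f(n)-f(n-1)$, which cancels against the right-hand side and leaves $-1$. I would note in passing that the paper's display writes $\left[\frac{n-1}{d}\right]$ with $d=1$, which is harmless. With that checked, the proposition is immediate.
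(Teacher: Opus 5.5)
Your proposal is correct and follows the paper's own argument: subtract Equation~\eqref{eq:n-1} from Equation~\eqref{eq:n} to get the difference formula, then use Lemmas 1 and 2 to remove the terms with $d\nmid n$. Your boundary check at $n=2$ is a detail the paper leaves implicit. There, Equation~\eqref{eq:n} at $n=1$ holds because $\star(1)=f(1)=1$, even though the defining recurrence itself only applies for $n\ge 2$.
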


\begin{proof}[Proof of Proposition 1]
\hphantom{.}\\
The proof is given in the discussion
preceding the statement of the Proposition.
\end{proof}

\begin{proof}[\large Proof of Theorem 1]
\hphantom{.}\\
$(\impliedby)$\\
Let $p$ be a prime number. It follows that
$$\star(p)= -1-\sum_{d=2, d|p}^{p-1}\star(d)\left( f\left(\left[\frac{p}{d}\right]\right)-f\left(\left[\frac{p-1}{d}\right]\right)\right).$$
For any prime $p$, there is no $d$ where $d|p$ and $2\leq d \leq p-1$.\\
Hence, $\sum_{d=2, d|p}^{p-1}\star(d)\left( f\left(\left[\frac{p}{d}\right]\right)-f\left(\left[\frac{p-1}{d}\right]\right)\right) = 0$. It follows that 
$$\star(p)= -1.$$
$(\implies)$\\
Consider the contrapositive: if $n$ is composite, then $\star(n)$ is not identically $-1$ for all valid functions $f$.

Assume $n$ is a composite number. Then $n$ has at least one proper divisor. Let $p$ be the smallest proper divisor of $n$. By definition, $p$ must be prime.

Let $q = \frac{n}{p}$. Because $p \ge 2$ is strictly the smallest proper divisor of $n$, it follows that $q$ is strictly the largest proper divisor of $n$.

Now reconsider the reformulated star function (Proposition 1). Since the summation is over $d$ where $d|n$, we can further simplify $\left[\frac{n}{d}\right] = \frac{n}{d}$ and $\left[\frac{n-1}{d}\right] = \frac{n}{d}-1$. Thus, the reformulation can further simplify to:
\begin{equation}
    \boxed{\star(n)= -1-\sum_{d=2, d|n}^{n-1}\star(d)\left( f\left(\frac{n}{d}\right)-f\left(\frac{n}{d}-1\right)\right)}
\end{equation}

Now consider the summation when $d=p$. Since $p$ is prime, we know from the $(\impliedby)$ direction of this proof that $\star(p) = -1$. Hence,
$$-\star(p)\left( f\left(\frac{n}{p}\right)-f\left(\frac{n}{p}-1\right)\right) = -(-1)\left( f(q)-f(q-1)\right) = f(q)-f(q-1),$$
where $q=\frac{n}{d}=\left[\frac{n}{d}\right]$. This introduces the term $+f(q)$ into the polynomial expansion of $\star(n)$. 

For this $f(q)$ term to be canceled out of the final polynomial, there must exist some other proper divisor $k$ in the summation (where $k \neq p$) that produces a $-f(q)$ term. The terms generated by any divisor $k$ are of the form $-\star(k)f\left(\frac{n}{k}\right)$ and $+\star(k)f\left(\frac{n}{k}-1\right)$. To produce $-f(q)$, we need $\frac{n}{k} = q$ or $\frac{n}{k}-1 = q$:

\textbf{Case 1:} If $\frac{n}{k} = q$, then $k = \frac{n}{q} = p$. But $p$ is unique, so no other $k$ satisfies this.

\textbf{Case 2:} If $\frac{n}{k}-1 = q$, then $\frac{n}{k} = q+1$. This implies that $n$ has a proper divisor $\frac{n}{k}$ that is strictly greater than $q$. This is a contradiction, since we defined $q$ as the largest proper divisor of $n$.

It follows that the term $+f(q)$ cannot be canceled by any other term in the summation. Since $n$ is composite, $q = \frac{n}{p} > 1$. This means $f(q)$ is not fixed to $f(1)=1$, but can assume any value by choosing the function $f$. The polynomial for $\star(n)$ contains the uncanceled variable term $+f(q)$, so $\star(n)$ cannot identically equal the constant $-1$ for all valid functions $f$. By contraposition, if $\star(n) = -1$ identically for all valid functions $f$, then $n$ must be prime.

This proves the conjecture that $\star(p)=-1$ iff $p$ is prime.
\end{proof}


\begin{thebibliography}{9}
    \bibitem{chen2010} Chen, N. \textit{M\"{o}bius Inversion in Physics}. Tsinghua Report and Review in Physics -- Vol. 1. World Scientific Publishing (2010).
    \bibitem{sahoo} Sahoo, D. Ph.D. Dissertation, The Pennsylvania State University (2027).
    \bibitem{meissel1826} Meissel, E. “Observations quaedam in theoria numerorum.” \textit{Journal f\"{u}r die reine und angewandte Mathematik} (1826).
    \bibitem{flagg} Flagg, J. M. Kauffman, L. H. Sahoo, D. ``Laws Of Form and the Riemann Hypothesis." \textit{Series on Knots \& Everything}, Vol. 72 (2021).

\end{thebibliography}
\end{document}